\documentclass{amsart}

\usepackage{lineno}
\usepackage{amssymb}
\usepackage{amsmath}
\usepackage{amsthm}
\usepackage{mathrsfs}
\usepackage{bbm}
\usepackage{cite}
\usepackage{color}
\usepackage{enumitem}
\usepackage{pgfplots}
\usepackage[utf8]{inputenc}

\usepackage[colorlinks,hypertexnames=false]{hyperref}
\usepackage[msc-links]{amsrefs}

\usepackage{changes}

\usepackage{mathtools}
\usepackage{extarrows}
\usepackage{setspace}

\usepackage{graphicx}

\usepackage{imakeidx}
\makeindex

\hypersetup{
	colorlinks=true,
	linkcolor=black,
	filecolor=black,
	urlcolor=black,
	citecolor=black,
}

\newtheorem{thm}{Theorem}[section]

\newtheorem{defn}[thm]{Definition}

\newtheorem{theorem}[thm]{Theorem}
\newtheorem{corollary}[thm]{Corollary}

\newtheorem{remark}[thm]{Remark}
\newtheorem{example}[thm]{Example}

\numberwithin{equation}{section}

\pgfplotsset{compat=1.18} 

\begin{document}

\title[]{On the Eneström-Kakeya Theorem for polynomials of an octonionic variable}
\author{Ting Yang}
\email[Ting Yang]{tingy@ustc.mail.edu.cn}
\address{School of Mathematics and Statistics, Anqing Normal University, Anqing, 246133, China}
\author{Xinyuan Dou}
\email[Xinyuan Dou]{douxinyuan@ustc.edu.cn}
\address{Department of Mathematics, University of Science and Technology of China, Hefei 230026, China}

%\date{\today}
\keywords{Eneström-Kakeya Theorem; octonionic; slice regular; zeros of polynomials}

\subjclass[2020]{Primary: 30C15; Secondary: 17A35}

\begin{abstract}
	To study the zeros of octonionic polynomials, we generalize the well-known Enestr\"{o}m-Kakeya Theorem to the case of octonions. In this paper, we first deal with octonionic polynomials with nonnegative and monotonic coefficients, and prove that its zero set is contained within the closed sphere of octonion space. Then, we also consider the octonionic polynomials which the coefficients muduli is monotonic and the real parts of the coefficients is monotonic respectively, and get some results.
\end{abstract}

\maketitle
\section{Introduction}
The Enestr\"{o}m-Kakeya Theorem states the distribution of the zeros of polynomials with complex variable as follows:
  \begin{theorem}\label{E-K-O}
   Let $p(z)=\sum_{k=0}^na_kz^k$ be a polynomial of degree $n$  with real coefficients satisfying 
    $$0< a_0\leq a_1\leq \dots\leq a_n,$$
    then all the zeros of $p$ lie in $|z|\leq 1.$
\end{theorem}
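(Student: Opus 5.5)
The approach is the classical one: multiply $p$ by $(1-z)$ and bound the resulting polynomial from below outside the closed unit disk. Set
$$q(z)=(1-z)p(z)=a_0+\sum_{k=1}^{n}(a_k-a_{k-1})z^k-a_nz^{n+1}.$$
Since $z=1$ is not a zero of $p$ (indeed $p(1)=\sum a_k>0$), the zeros of $p$ are exactly the zeros of $q$ other than $z=1$. So it suffices to show that $q(z)\neq 0$ whenever $|z|>1$.

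Fix $z$ with $|z|>1$. Write
$$q(z)=-a_nz^{n+1}+g(z),\qquad g(z)=a_0+\sum_{k=1}^{n}(a_k-a_{k-1})z^k.$$
The monotonicity hypothesis gives $a_k-a_{k-1}\ge 0$, and also $a_0>0$. Hence, by the triangle inequality together with $|z|^k<|z|^n$ for $k<n$,
$$|g(z)|\le a_0+\sum_{k=1}^n(a_k-a_{k-1})|z|^k\le |z|^n\Big(a_0+\sum_{k=1}^n(a_k-a_{k-1})\Big)=a_n|z|^n.$$
The second inequality is strict, because the term $a_0\cdot 1<a_0|z|^n$ already loses something.

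It follows that
$$|q(z)|\ge a_n|z|^{n+1}-|g(z)|>a_n|z|^{n+1}-a_n|z|^n=a_n|z|^n(|z|-1)>0.$$
Therefore $q$, and hence $p$, has no zeros with $|z|>1$, which proves the theorem.

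The only delicate point is strictness. The telescoping sum uses the fact that all differences $a_k-a_{k-1}$ and $a_0$ are nonnegative, which is what allows the triangle inequality to collapse to $a_n|z|^n$. The positivity $a_0>0$ is what makes the inequality strict. In any case, the final factor $|z|-1>0$ alone already forces $|q(z)|>0$.
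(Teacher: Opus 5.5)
Your proof is correct. The paper only quotes this classical statement without proof. Its proof of the octonionic analogue, Theorem~\ref{E-K}, which contains the complex case, follows the standard route and differs from yours at exactly one step.

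Both arguments write $(1-z)p(z)=g(z)-a_nz^{n+1}$, with $g(z)=a_0+\sum_{k=1}^n(a_k-a_{k-1})z^k$ (the paper's $f$). Both then reduce to the inequality $|g(z)|\le a_n|z|^n$ for $|z|\ge1$. The paper gets this in two stages. It first bounds $|g|\le a_n$ only on $|z|=1$. It then applies the maximum modulus principle (Theorem~\ref{MMP}) to the reversed polynomial $z^ng(1/z)$ on the unit ball. You get the inequality directly for every $|z|>1$, from $|z|^k\le|z|^n$ and the nonnegativity of $a_0$ and of the differences $a_k-a_{k-1}$.

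Your version is more elementary. It needs only the triangle inequality and $|z^kc|=|z|^k|c|$, which in $\mathbb{O}$ is just multiplicativity of the norm. So it carries over verbatim to Theorem~\ref{E-K} and Theorem~\ref{Moduli}. It avoids both the maximum modulus principle for slice regular functions and the Artin-theorem bookkeeping showing that $q^nf(1/q)$ is again a polynomial.

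The maximum-modulus route is more flexible in principle. It upgrades any bound $M$ for $|g|$ on the unit circle to $M|z|^n$ outside it, which matters when the circle bound exploits cancellation. Here the circle bound is itself termwise, so that flexibility buys nothing.

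A small point: the strict middle inequality you claim requires $n\ge1$. As you note, strictness is not needed anyway, since the factor $|z|-1>0$ already gives $(1-z)p(z)\neq0$.
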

This work was first published in 1893 by G. Enestr\"{o}m
\cite {enestrom1893harledning}.
In the following three years, he mentioned his result again in publications. In 1912, S. Kakeya removed the restriction on the monotonicity of coefficients, and obtained a more general result \cite{kakeya1912limits}. Later, the generalization of  the Eneström-Kakeya Theorem has yielded abundant results.  For instance, A. Joyal, G. Labelle, and Q. I. Rahman
dropped the condition of nonnegativity and maintained the condition of monotonicity \cite{joyal1967location}.
N. K. Govil and Q. I. Rahman presented a
result that is applicable to polynomials with complex coefficients \cite{govil1968enestrom}.
\begin{theorem}
     Let $p(z)=\sum_{k=o}^na_kz^k\not\equiv 0$ be a polynomial of degree $n$ with complex coefficients,  such that
    \begin{equation*}
      |\arg a_k\ \beta|\leq \alpha\leq \pi/2,\quad  k=0,1,2,\dots n.
    \end{equation*}
    for some real $\beta,$ and
    $$|a_n|\geq |a_{n-1}| \geq |a_{n-2}|  \geq\dots  \geq|a_0| .$$
    Then $p(z)$ has all its zeros on or inside the circle
    $$|z|=\cos{\alpha}+\sin{\alpha}+\frac{2\sin{\alpha}}{|a_n|}\sum_{k=0}^{n-1}|a_k|.$$
\end{theorem}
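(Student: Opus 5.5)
We prove the Govil--Rahman bound by the classical device of multiplying by $(1-z)$. After replacing $p$ by $e^{i\beta}p$, which does not move the zeros, we may assume $|\arg a_k|\le\alpha$ for all $k$; here $\beta$ is the rotation in the hypothesis, whose displayed condition should be read as $|\arg a_k-\beta|\le\alpha$. Set
\[
q(z)=(1-z)p(z)=-a_nz^{n+1}+\sum_{k=1}^{n}(a_k-a_{k-1})z^k+a_0 .
\]
The zeros of $p$ are among those of $q$. Hence it suffices to show that $q(z)\neq 0$ whenever $|z|>R$, where $R$ denotes the radius in the statement.

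\textbf{Step 1: an inequality for coefficient differences.} We first show that for complex numbers $a,b$ with $|\arg a|,|\arg b|\le\alpha\le\pi/2$ and $|a|\ge|b|$,
\[
|a-b|\le(|a|-|b|)\cos\alpha+(|a|+|b|)\sin\alpha .
\]
Write $a=|a|e^{i\theta}$ and $b=|b|e^{i\phi}$. The angle between $a$ and $b$ is $\gamma=|\theta-\phi|\le2\alpha$. Then
\[
|a-b|\le\bigl||a|-|b|\bigr|+2|b|\sin(\gamma/2)\le(|a|-|b|)+2|b|\sin\alpha .
\]
This is not quite the target. The sharper route is to project onto the real axis and onto the imaginary axis:
\[
|a-b|\le|\mathrm{Re}(a-b)|+|\mathrm{Im}(a-b)| .
\]
Estimating $\mathrm{Re}\,a\ge|a|\cos\alpha$ and so on yields the stated form. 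This is the standard Govil--Rahman lemma, and it is the step I expect to require the most care, since the case analysis on the signs of $\mathrm{Re}(a-b)$ must be handled properly.

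\textbf{Step 2: estimating $q$.} For $|z|>1$ we have
\[
|q(z)|\ge|z|^{n}\Bigl(|a_n||z|-\sum_{k=1}^n|a_k-a_{k-1}|\,|z|^{k-n}-|a_0||z|^{-n}\Bigr)\ge|z|^n\Bigl(|a_n||z|-\sum_{k=1}^n|a_k-a_{k-1}|-|a_0|\Bigr).
\]
Apply Step 1 with $a=a_k$ and $b=a_{k-1}$, which is allowed by the monotonicity of the moduli. The $\cos\alpha$ terms telescope to $(|a_n|-|a_0|)\cos\alpha$. The $\sin\alpha$ terms sum to $\bigl(|a_n|+2\sum_{k=1}^{n-1}|a_k|+|a_0|\bigr)\sin\alpha$. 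Adding $|a_0|$ and using $|a_0|(1-\cos\alpha+\sin\alpha)\le 2|a_0|\sin\alpha$ gives
\[
\sum_{k=1}^n|a_k-a_{k-1}|+|a_0|\le|a_n|(\cos\alpha+\sin\alpha)+2\sin\alpha\sum_{k=0}^{n-1}|a_k| .
\]
The inequality $1-\cos\alpha\le\sin\alpha$ used here holds on $[0,\pi/2]$.

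\textbf{Step 3: conclusion.} By Step 2, $|q(z)|>0$ whenever $|z|>R$ and $|z|>1$. Since $R\ge\cos\alpha+\sin\alpha\ge1$, the condition $|z|>R$ already implies $|z|>1$. So $q$, and therefore $p$, has no zeros outside $|z|\le R$. The main obstacle remains the sharp elementary inequality in Step 1.
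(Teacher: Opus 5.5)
Your Steps 2 and 3 are correct. The direct bound $|z|^{k-n}\le 1$ for $|z|>1$ is simpler than the paper's detour, in its octonionic Theorem~\ref{Moduli}, through the maximum modulus principle applied to $z^nf(1/z)$. The gap is Step 1. It is the only nontrivial ingredient, and you assert it rather than prove it. Worse, the route you call ``sharper'' cannot work once $\alpha>\pi/4$, because the intermediate quantity $|\mathrm{Re}(a-b)|+|\mathrm{Im}(a-b)|$ can itself exceed the target. Take $a=e^{i\pi/4}$ and $b=\varepsilon e^{i\pi/4}$ with $\varepsilon>0$ small. Both satisfy $|\arg|\le\alpha$ for every $\alpha\ge\pi/4$, and
\[
|\mathrm{Re}(a-b)|+|\mathrm{Im}(a-b)|=\sqrt2\,(1-\varepsilon),\qquad (|a|-|b|)\cos\alpha+(|a|+|b|)\sin\alpha=(1-\varepsilon)\cos\alpha+(1+\varepsilon)\sin\alpha .
\]
For $\pi/4<\alpha\le\pi/2$ the right-hand quantity tends to $\cos\alpha+\sin\alpha<\sqrt2$ as $\varepsilon\to0$. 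So no case analysis on signs can rescue this route: the $\ell^1$ sum is not rotation invariant and overshoots the Euclidean norm you need to bound.

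You already have the missing step. The estimate you set aside, $|a-b|\le(|a|-|b|)+2|b|\sin\alpha$, is \emph{stronger} than the target, because
\[
(|a|-|b|)\cos\alpha+(|a|+|b|)\sin\alpha-\bigl[(|a|-|b|)+2|b|\sin\alpha\bigr]=(|a|-|b|)(\cos\alpha+\sin\alpha-1)\ge0
\]
whenever $|a|\ge|b|$ and $0\le\alpha\le\pi/2$. Feeding it into Step 2 even gives the smaller radius $1+\frac{2\sin\alpha}{|a_n|}\sum_{k=0}^{n-1}|a_k|$. The paper's proof of Theorem~\ref{Moduli} uses the law of cosines instead. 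With $\gamma\le2\alpha$ the angle between $a_k$ and $a_{k-1}$, it gets $|a_k-a_{k-1}|^2\le|a_k|^2+|a_{k-1}|^2-2|a_k||a_{k-1}|\cos2\alpha=(|a_k|-|a_{k-1}|)^2\cos^2\alpha+(|a_k|+|a_{k-1}|)^2\sin^2\alpha$. It then applies $\sqrt{x^2+y^2}\le x+y$ for $x,y\ge0$. This bounds the exact Euclidean norm, which is increasing in the angle, before passing to a sum. Either argument closes the gap. As written, though, Step 1 is unproved, and the method you point to fails for every $\alpha\in(\pi/4,\pi/2]$. (Minor: to normalize, multiply by $e^{-i\beta}$, not $e^{i\beta}$.)
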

The above theorem reduces to  Theorem \ref{E-K-O} when  $\alpha=\beta=0$. In this paper,
they also prove the following.
\begin{theorem}
    Let $p(z)=\sum_{k=0}^na_kz^k\not\equiv 0$ be a polynomial of degree $n$. If $\Re a_k=\alpha _k,\ \Im a_k=\beta_k\  \text{for}\ k=0,1,2,\dots,n.$ and 
\begin{equation*}
   \alpha _n \geq \alpha _{n-1}\geq\dots  \geq \alpha _1   \geq\alpha _0\geq 0,\  \alpha _n>0,
\end{equation*}
    then $p(z)$ does not vanish in  
    $$|z|> 1+\frac{2}{\alpha_n}\sum_{k=0}^n|\beta_k|.$$
\end{theorem}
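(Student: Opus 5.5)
The plan is the classical Govil--Rahman argument: multiply $p$ by $(1-z)$, split the resulting coefficients into real and imaginary parts, use the monotonicity of the $\alpha_k$ to telescope the real part, and bound the imaginary part crudely.

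\textbf{Setting up.} Put $g(z)=(1-z)p(z)$. Then
\[
g(z)=-a_nz^{n+1}+\sum_{k=1}^{n}(a_k-a_{k-1})z^k+a_0 .
\]
Every zero of $p$ is a zero of $g$, so it suffices to show that $g(z)\neq 0$ whenever
\[
|z|>R:=1+\frac{2}{\alpha_n}\sum_{k=0}^n|\beta_k| .
\]

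\textbf{Estimating the tail.} Fix $|z|>1$. Write $a_k-a_{k-1}=(\alpha_k-\alpha_{k-1})+i(\beta_k-\beta_{k-1})$ and use the triangle inequality:
\[
|g(z)|\ \ge\ |a_n||z|^{n+1}-\Bigl|\sum_{k=1}^n(a_k-a_{k-1})z^k+a_0\Bigr| .
\]
Factor $|z|^n$ out of the second term and use $|z|^{k-n}\le 1$ for $0\le k\le n$. This bounds the second term by
\[
|z|^n\Bigl(\sum_{k=1}^n(\alpha_k-\alpha_{k-1})+\alpha_0+\sum_{k=1}^n(|\beta_k|+|\beta_{k-1}|)+|\beta_0|\Bigr).
\]
Because $\alpha_n\ge\dots\ge\alpha_0\ge0$, every difference $\alpha_k-\alpha_{k-1}$ is nonnegative, so the real part telescopes exactly to $\alpha_n$. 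The imaginary part is at most $2\sum_{k=0}^n|\beta_k|$.

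\textbf{Concluding.} Using $|a_n|\ge\alpha_n>0$, we get
\[
|g(z)|\ \ge\ |z|^n\Bigl(\alpha_n|z|-\alpha_n-2\sum_{k=0}^n|\beta_k|\Bigr).
\]
This is strictly positive once $|z|>R$. Since $R\ge1$, the assumption $|z|>1$ used in the estimate is automatic, so $p(z)\ne0$ for all $|z|>R$.

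The only delicate point is the bookkeeping of $\alpha_0$ and $\beta_0$. The constant term $a_0$ must be grouped with the real parts so that the sum $\sum_{k=1}^n(\alpha_k-\alpha_{k-1})+\alpha_0$ telescopes exactly to $\alpha_n$. The imaginary pieces then contribute at most twice $\sum_{k=0}^n|\beta_k|$, which gives exactly the constant $2/\alpha_n$ in the statement.
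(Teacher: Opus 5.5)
Your proof is correct; it is the classical Govil--Rahman argument, and your bookkeeping even gives a little slack: $|a_0|+\sum_{k=1}^n|a_k-a_{k-1}|\le \alpha_n+2\sum_{k=0}^n|\beta_k|-|\beta_n|$. The paper never proves this statement; it is quoted in the introduction from Govil and Rahman. The relevant comparison is therefore with the proof of the paper's last theorem, its octonionic generalization. That theorem reduces to this statement for coefficients in $\mathbb{R}+\mathbb{R}e_1$ and $q$ on that slice.

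That proof shares your skeleton: multiply by $1-q$, telescope the real parts, and bound the imaginary parts crudely. It differs from yours in two choices.

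First, the paper bounds its auxiliary function only on $|q|=1$. It then transfers the bound to $|q|\ge 1$ by the maximum modulus principle applied to $q^nf(1/q)$, as in the proof of Theorem~\ref{E-K}. You get the bound on $|z|\ge 1$ directly from $|z|^{k-n}\le 1$, which is more elementary and makes the maximum modulus principle unnecessary.

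Second, and more importantly, the paper sets $f=p*(1-q)+q^{n+1}a_{n,0}$, which removes only the real part of the leading term. So $f$ keeps the term $-q^{n+1}(a_n-a_{n,0})$ and has degree $n+1$ whenever $a_n$ is not real. Then $q^nf(1/q)$ has a pole at the origin, and the asserted estimate $|f(q)|\le M|q|^n$ for $|q|\ge 1$ fails for large $|q|$. For example, with $n=1$, $a_0=1$, $a_1=1+e_1$ one gets $M=3$, but $f(10)=1-90e_1$, so $|f(10)|>30$.

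Your split avoids exactly this: you keep the whole leading term $-a_nz^{n+1}$ apart, so the remainder has degree at most $n$, and only afterwards use $|a_n|\ge\alpha_n$. Your argument uses only the triangle inequality, $|q^kc|=|q|^k|c|$, and the identity $p*(1-q)=(1-q)p(q)$ established in the paper. So it carries over essentially verbatim to octonions and gives a sound proof of the paper's final theorem.
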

 This result reduces to Theorem \ref{E-K-O} when $\beta_k=0\  \text{for}\ k=0,1,2,\dots,n$ . The development process of Enestr\"{o}m-Kakeya Theorem is detailed in the literature \cite{2014Current}.
Later,  the Enestr\"{o}m-Kakeya Theorem has been widely generalized from the complex to quaternionic setting (\cites{tripathi2020note,carney2020enestrom,gardner2022generalization,milovanovic2022zeros,hussain2023enestrom,mir2024estimation,gardner2025type}, etc.)

In this paper, we are going to generalize the Enestr\"{o}m-Kakeya Theorem to the setting of octonions $\mathbb{O}$, which is a nonassociative, alternative division algebra.

  \section{Priliminaries}

 The algebra of octonions \cite{baez2002octonions} can be constructed by considering a basis $\varepsilon=\{e_0,e_1,\dots,e_6,e_7\}$ of $\mathbb{R}^8$ and relations 
 $$e_{\alpha}e_{\beta}=-\delta_{\alpha \beta }+\psi_{\alpha \beta \gamma }e_{\gamma },\ \alpha,\beta,\gamma =1,2,\dots,7,$$
where $e_0=1$, $\delta _{\alpha \beta}$ is the Kronecker delta, $\psi_{\alpha\beta\gamma}$ is totally antisymmetric in $\alpha,\beta,\gamma,$ and $\psi_{\alpha\beta\gamma}$ equals $1$ if $(\alpha,\beta,\gamma)$ belongs to the set
$$\sigma =\{(1,2,3),(1,4,5),(2,4,6),(3,4,7),(2,5,7),(1,6,7),(5,3,6)\}.$$
Every element in $\mathbb{O}$ can be expressed as $\omega=x_0+\sum_{k=1}^7x_ke_k,$ and its square norm $|\omega|^2=\sum_{k=1}^7x_k^2.$ 

The set $(1,e_1,e_2,e_1e_2)$ form a basis for a subalgebra of $\mathbb{O}$ isomorphic to the algebra $\mathbb{H}$ of quaternions. Moreover, every octonion can viewed as four complex numbers or as two quternions. Indeed,

\begin{equation*}
\begin{split}
\mathbb{O}&=\mathbb{R}+\mathbb{R}e_1+\mathbb{R}e_2+\mathbb{R}e_3+\mathbb{R}e_4+\mathbb{R}e_5+\mathbb{R}e_6+\mathbb{R}e_7\\
&=(\mathbb{R}+\mathbb{R}e_1)+(\mathbb{R}+\mathbb{R}e_1)e_2+[(\mathbb{R}+\mathbb{R}e_1)+(\mathbb{R}+\mathbb{R}e_1)e_2]e_4 \\
&=\mathbb{C}+\mathbb{C}e_2+(\mathbb{C}+\mathbb{C}e_2)e_4\\
&=\mathbb{H}+\mathbb{H}e_4.
\end{split}
\end{equation*}
Let $\mathbb{S}$ denote the unit sphere of purely imaginary octonions, i.e.,
$$\mathbb{S}=\{\omega=\sum_{k=1}^7x_ke_k:\sum_{k=1}^7x_k^2=1\}.$$
It is known that if $I\in \mathbb{S},$ then $I^2=-1.$

The definition of slice regular functions of a octonionic variable as follows (see \cites{gentili2010regular,gentili2013regular}):
\begin{defn}
    Let $\Omega $ be a domian in $\mathbb{O}.$ A real differentiable function  $f:\Omega \rightarrow{\mathbb{O}}$ is said to be $($slice$)$ regular if, for every $I\in \mathbb{S},$ its restriction $f_I$ to the complex line $L_I=\mathbb{R}+\mathbb{R}_I$ passing through the origin and containing $1$ and $I$ is holomorphic in $\Omega_I=\Omega \bigcap L_I;$ namely,
    $$\bar{\partial}_If(x+yI)=\frac{1}{2}\left(\frac{\partial}{\partial x}+I\frac{\partial}{\partial y}\right)f_I(x+yI)$$
    vanishes identically.
\end{defn}
 \begin{example}
   Let $f(q)=a_0+qa_1+q^2a_2+\dots+q^na_n,\ a_l\in \mathbb{O}$ for all $l\in \mathbb{N}$, then $f$ is slice regular on $\mathbb{O}.$
 \end{example}
For all $R\in (0,+\infty],$ denote
$$\mathbb{B}(0,R)=\{q\in \mathbb{O}:\ |q|<R\}.$$

Since the product of two regular functions is not necessarily regular, we need the following multiplication.
\begin{defn}
    Let $f,g :\mathbb{B}(0,R)\rightarrow{\mathbb{O}}$ be regular functions and let $f(q)=\sum_{n\in \mathbb{N}}q^na_n,\ g(q)=\sum_{n\in \mathbb{N}}q^nb_n$ be their power series expansions. The regular product of $f$ and $g$ is the regular functions defined by
\begin{equation}\label{star product}
        f*g(q)=\sum_{n\in \mathbb{N}}q^n\left(\sum_{k=0}^n a_kb_{n-k}\right)
\end{equation}
    on $\mathbb{B}(0,R).$
\end{defn}

 In the study of extending Enestr\"{o}m-Kakeya Theorem to octonion algebra, we also need the corresponding Maximum Modulus Principle.
\begin{theorem}$($Maximum Modulus Principle$)$ \label{MMP}
Let $f:\mathbb{B}(0,R)\rightarrow{\mathbb{O}}$ be regular function. If $|f|$ has a relative maximum at a point $a\in \mathbb{B}(0,R),$ then $f$ is constant on $\mathbb{B}(0,R).$
\end{theorem}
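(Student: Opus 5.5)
We plan to prove the Maximum Modulus Principle by reducing it to the classical complex case on each slice $L_I$, via a splitting lemma, and then globalizing with the identity principle.

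\textbf{Step 1 (splitting lemma).} Fix $I\in\mathbb{S}$ and choose $J\in\mathbb{S}$ orthogonal to $1$ and $I$, and $K\in\mathbb{S}$ orthogonal to $1,I,J,IJ$. As in the decomposition $\mathbb{O}=\mathbb{C}+\mathbb{C}e_2+(\mathbb{C}+\mathbb{C}e_2)e_4$ above, with $\mathbb{C}_I=\mathbb{R}+\mathbb{R}I$ in place of $\mathbb{C}$, every octonion is
$$w=z_1+z_2J+(z_3+z_4J)K,\qquad z_i\in\mathbb{C}_I .$$
Writing $f_I=F_1+F_2J+(F_3+F_4J)K$ with $F_i:\Omega_I\to\mathbb{C}_I$, we must show that $\bar\partial_I f_I=0$ is equivalent to each $F_i$ being holomorphic. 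The key fact needed is that left multiplication by $I$ commutes with right multiplication by $J$ and by $K$ in the form $I(zJ)=(Iz)J$ and $I((zJ)K)=((Iz)J)K$ for $z\in\mathbb{C}_I$. Each of these products lies in an associative subalgebra generated by two elements (Artin's theorem), or, for the triple products, follows from the Moufang identities. This is the only point where nonassociativity must be handled with care, and it is the step we expect to be the main technical obstacle.

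\textbf{Step 2 (slice argument).} Suppose $|f|$ has a relative maximum at $a\in\mathbb{B}(0,R)$. Write $a=x+yI$ with $I\in\mathbb{S}$, choosing $I$ arbitrary if $a$ is real. Then $|f_I|$ has a local maximum at $a$ in $\Omega_I=\mathbb{B}(0,R)\cap L_I$, which is a disc. Since the decomposition is orthogonal, $|f_I|^2=\sum_i|F_i|^2$. Thus $\sum_i|F_i|^2$ is a sum of squared moduli of holomorphic functions, so it is subharmonic and $\log$-plurisubharmonic. Equivalently, $(F_1,\dots,F_4)$ is a holomorphic map into $\mathbb{C}^4$ whose Euclidean norm attains an interior local maximum.

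The vector-valued maximum principle then gives that $(F_1,\dots,F_4)$ is constant on the disc. One standard proof is to pick a unit vector $v\in\mathbb{C}^4$ with $\langle F(a),v\rangle=\|F(a)\|$. Then $g=\langle F,v\rangle$ is holomorphic and $|g|\le\|F\|\le\|F(a)\|=g(a)$, so $g$ is constant near $a$. Equality in Cauchy--Schwarz then forces $F\equiv F(a)$ near $a$, and hence on the whole disc. Therefore $f_I$ is constant on $\Omega_I$.

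\textbf{Step 3 (globalize).} We now know $f-f(a)$ vanishes on $\Omega_I$. By the power series expansion $f(q)=\sum q^na_n$ on $\mathbb{B}(0,R)$, restricting to the real diameter gives $\sum x^n(a_n-\delta_{n0}f(a))=0$ for real $x$. Hence $a_n=0$ for $n\ge1$, and $f\equiv a_0$ on $\mathbb{B}(0,R)$. This is the identity principle, and it uses the series expansion already present in the definition of the regular product.
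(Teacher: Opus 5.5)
The paper does not prove this statement; it is quoted from the octonionic slice-regular literature. So there is no internal argument to compare with. Your reduction to a vector-valued complex maximum principle on a slice is the natural route. The vector-valued argument in Step 2 and the identity principle in Step 3 are fine in themselves. The problem is the ``key fact'' in Step 1, which is false, and Step 2 depends on it.

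For $h,w$ in the quaternion subalgebra $\mathbb{H}_{I,J}$ generated by $I,J$, the Cayley--Dickson rule for $\mathbb{O}=\mathbb{H}_{I,J}\oplus\mathbb{H}_{I,J}K$ gives $h(wK)=(wh)K$. Hence
\[
I\bigl((zJ)K\bigr)=\bigl((zJ)I\bigr)K=-\bigl((Iz)J\bigr)K,\qquad z\in\mathbb{C}_I .
\]
For $z=1$ this is the familiar anti-associativity $I(JK)=-(IJ)K$ of a basic triple, so no appeal to Artin or Moufang can give your sign. Your other two identities, $I(zJ)=(Iz)J$ and $I(zK)=(Iz)K$, are correct. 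Write $\partial=\tfrac12(\partial_x-I\partial_y)$ and $\bar\partial=\tfrac12(\partial_x+I\partial_y)$ on $\mathbb{C}_I$-valued functions. Then
\[
\bar\partial_I f_I=\bar\partial F_1+(\bar\partial F_2)J+\bigl(\bar\partial F_3+(\partial F_4)J\bigr)K ,
\]
so regularity means $F_1,F_2,F_3$ are holomorphic but $F_4$ is \emph{anti}holomorphic. Thus $(F_1,\dots,F_4)$ is not a holomorphic map into $\mathbb{C}^4$. Your $g=\langle F,v\rangle$ need not be holomorphic, and the classical maximum principle cannot be applied to it as written. Do not check the identity against the table printed in Section 2. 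As printed it is not alternative: $e_1(e_2e_4)+e_2(e_1e_4)=2e_7\neq 0=(e_1e_2+e_2e_1)e_4$, and it wrongly gives $e_1(e_2e_4)=(e_1e_2)e_4$.

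The repair is cheap; any of the following works.
\begin{enumerate}
\item Run Step 2 with $(F_1,F_2,F_3,\overline{F_4})$, which is holomorphic and has the same Euclidean norm. Equivalently, write the last summand as $G\,(JK)$ with $G=\overline{F_4}$, using $(\bar zJ)K=z(JK)$.
\item Skip the splitting lemma entirely. The map $L_I\colon w\mapsto Iw$ satisfies $L_I^2=-\mathrm{id}$ by alternativity, $|Iw|=|w|$, and $(x+yI)w=xw+y\,Iw$. So $\bar\partial_If_I=0$ says exactly that $f_I$ is holomorphic into the $4$-dimensional complex Hilbert space $(\mathbb{O},L_I)$, whose norm is the octonionic modulus. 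Your Cauchy--Schwarz argument then applies verbatim, and only $I(Iw)=-w$ is used, which removes what you flagged as the main obstacle.
\item Use your own subharmonicity remark. $|f_I|^2=\sum|F_i|^2$ is subharmonic whether $F_4$ is holomorphic or antiholomorphic, so it is constant near $a$. Then $0=\sum_i\Delta|F_i|^2$ with each term $\ge 0$ forces every $F_i$ to be constant.
\end{enumerate}
Step 3 is correct, given the power series expansion on $\mathbb{B}(0,R)$ that the paper assumes.
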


\section{Main results}
Now we can generalize Theorem \ref{E-K-O} to polynomials of a octonionic
variable.
\begin{theorem}\label{E-K}
   Let $p(q)=a_0+qa_1+q^2a_2+\dots+q^na_n$ be a polynomial of degree $n$ with octonionic variable and real coefficients satisfying 
    $$0< a_0\leq a_1\leq \dots\leq a_n.$$
    Then all the zeros of $p$ lie in $|q|\leq 1.$
\end{theorem}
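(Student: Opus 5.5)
Since all coefficients are real, the computation can be pushed into a single complex slice. Write any $q\in\mathbb{O}$ with $q\notin\mathbb{R}$ as $q=x+yI$ with $I\in\mathbb{S}$, $y>0$. For real $q$ take any $I$. The subalgebra $L_I=\mathbb{R}+\mathbb{R}I$ is a commutative field isomorphic to $\mathbb{C}$ via $I\mapsto i$. The powers $q^k$ lie in $L_I$; this is unambiguous because octonions are power-associative, and in any case everything happens inside the associative subalgebra generated by $q$. The coefficients $a_k$ are real, so $p(q)=\sum_k q^k a_k$ is computed entirely in $L_I$. Hence $p(q)=\phi^{-1}\bigl(P(z)\bigr)$, where $P(z)=\sum_k a_k z^k$ is the complex polynomial, $z=x+iy$ and $\phi:L_I\to\mathbb{C}$ is the isomorphism. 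This isomorphism preserves moduli, since the octonionic norm restricted to $L_I$ is $\sqrt{x^2+y^2}$. So $p(q)=0$ if and only if $P(z)=0$, and $|q|=|z|$.

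One route is simply to invoke Theorem~\ref{E-K-O} for $P$: its zeros satisfy $|z|\le 1$, hence every zero $q$ of $p$ satisfies $|q|\le1$.

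To keep the argument self-contained and in the spirit of the paper, I would instead replay the classical proof inside $L_I$. Consider $(1-q)p(q)=a_0+\sum_{k=1}^{n}q^k(a_k-a_{k-1})-q^{n+1}a_n$. This product is legitimate: all quantities lie in the field $L_I$, and it also equals the regular product $(1-q)*p$ from \eqref{star product}, because the coefficients are real. For $|q|>1$ we estimate
\[
|(1-q)p(q)|\ \ge\ |q|^{n+1}a_n-\Bigl|a_0+\sum_{k=1}^n q^k(a_k-a_{k-1})\Bigr|
\ \ge\ |q|^{n+1}a_n-|q|^n\Bigl(a_0+\sum_{k=1}^n(a_k-a_{k-1})\Bigr)=|q|^n a_n(|q|-1)>0.
\]
Here we used $a_k-a_{k-1}\ge0$ and $a_0>0$, together with $|q|^k\le|q|^n$. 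Since $1-q\neq0$ for $|q|>1$ and $\mathbb{O}$ (indeed $L_I$) has no zero divisors, $p(q)\neq0$.

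The only delicate point is justifying that the slice reduction is valid in the nonassociative setting. Specifically, one must check that $q^k$ is well defined and lies in $L_I$, and that multiplying $p(q)$ by $(1-q)$ produces the telescoped expression. Both follow from power-associativity, or from Artin's theorem that the subalgebra generated by $q$ alone is associative, combined with the reality of the $a_k$. The Maximum Modulus Principle (Theorem~\ref{MMP}) is not needed for this statement.
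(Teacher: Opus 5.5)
Your proof is correct, but it takes a genuinely different route from the paper's. The paper never restricts to a slice. It forms the regular product $p*(1-q)$ and uses Artin's theorem to identify it pointwise with $(1-q)p(q)$. It bounds $f(q)=a_0+\sum_{k=1}^n q^k(a_k-a_{k-1})$ by $a_n$ on $|q|=1$. It then upgrades this to $|f(q)|\le a_n|q|^n$ for $|q|\ge1$ by applying the Maximum Modulus Principle (Theorem~\ref{MMP}) to the reversed polynomial $g(q)=q^nf(1/q)$ on the unit ball.

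You instead use that real coefficients make $p$ map each slice $L_I$ into itself. There, via the isometric isomorphism $L_I\cong\mathbb{C}$, it is just the complex polynomial $P$. So the statement is literally Theorem~\ref{E-K-O}: indeed the zero set of $p$ is the union of the spheres $x+y\mathbb{S}$ over the zeros $x+iy$ of $P$, and nonassociativity never enters. In your self-contained version, the bound $|f(q)|\le a_n|q|^n$ for $|q|>1$ comes straight from $|q|^k\le|q|^n$. This shows the detour through $g$ and the Maximum Modulus Principle is not needed at all.

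What the paper's global formulation buys is a template that survives octonionic coefficients. The identity $p*(1-q)=(1-q)p(q)$ still holds by Artin's theorem, and the paper reuses it in Theorem~\ref{Moduli}. There your slice reduction is unavailable, because $p(q)$ no longer stays in $L_I$. Your direct estimate, however, does not use reality of the coefficients and would carry over to Theorem~\ref{Moduli}.

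A minor logical point: passing from $(1-q)p(q)\neq0$ to $p(q)\neq0$ is immediate, since $(1-q)\cdot 0=0$. Neither $1-q\neq0$ nor the absence of zero divisors is needed for that direction.
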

\begin{proof}
    Let \begin{equation*}
        p(q)=a_0+qa_1+q^2a_2+\dots+q^na_n\quad \text{with}\ q\in \mathbb{O},
    \end{equation*}
   where $a_k\in \mathbb{R}$ for $k=0,1,\dots n.$ Then $p(q)$ is a regular function on $\mathbb{O}.$
     By the regular product of regular functions, we get
\begin{equation}\label{real coe}
    p(q)*(1-q)=a_0+qa_1+q^2a_2+\dots+q^na_n-q(a_0+qa_1+q^2a_2+\dots+q^na_n).
\end{equation}

Due to the non-associativity of octonions, using to the  Artin Theorem in \cite{schafer2017introduction}, we get
$$q(q^ka_k)=q^{k+1}a_k\quad \text{for any}\ q\in \mathbb{O},a_k\in \mathbb{R} \ \text{and}\ k=0,1,\dots,n. $$
It is obvious that 
$$p(q)*(1-q)=a_0+q(a_1-a_0)+q^2(a_2-a_1)+\dots+q^n(a_n-a_{n-1})-q^{n+1}a_n.$$
     
Denote
    $$f(q):=a_0+q(a_1-a_0)+q^2(a_2-a_1)+\dots+q^n(a_n-a_{n-1}), $$ 
    then $f(q)=p(q)*(1-q)+q^{n+1}a_n,$ which is a regular function on $\mathbb{O}.$
From equation $(\ref{real coe}),$ 
$$p(q)*(1-q)=p(q)-qp(q)=(1-q)p(q).$$
Hence
\begin{equation}\label{zero}
    p(q)*(1-q)=0  \ \text{if and only if }\ p(q)=0\ \text{or }q=1.
\end{equation}

As a similar method in the case of complex variable function \cite{2014Current},
for $|q|=1,$ we have
\begin{equation*}
    \begin{split}
       |f(q)|&=|a_0+\sum_{k=1}^nq^k(a_k-a_{k-1})|\\
       &\leq  |a_0|+\sum_{k=1}^n|q^k(a_k-a_{k-1})|\\
       &=|a_0|+\sum_{k=1}^n|(a_k-a_{k-1})|
    \end{split}.
\end{equation*}
Due to the non-negative monotonicity of coefficients of this polynomial,
$$|f(q)|\leq a_0+\sum_{k=1}^n(a_k-a_{k-1})=a_n\quad \text{for all}\ |q|=1.$$
Let $g(q)=q^nf\left (1/q\right)$ for all $q\in \mathbb{O}.$ Then
$g(q)$ is also a octonionic polynomial of degree $n$ with  real coefficients, which is a regular function on $|q|\leq 1.$
Note that
$$\max_{|q|=1}|g(q)|=\max_{|q|=1}\left|f\left (1/q\right)\right|=\max_{|q|=1}|f(q)|\leq a_n.$$
By Theorem \ref{MMP}, we have
$|g(q)|\leq a_n$ on $|q|\leq 1.$ That is, 
$$\left|f\left (1/q\right)\right|\leq \frac {a_n}{|q|^n}\quad \text{for all}\ 0<|q|\leq 1.$$ 
Hence,
$$\left|f(q)\right|\leq a_n|q|^n\quad \text{for all}\ |q|\geq 1.$$
Then for all $|q|\geq 1,$
\begin{equation*}
    \begin{split}
       \left| p(q)*(1-q)\right|&=|f(q)-q^{n+1}a_n|\\
       &\geq a_n|q|^{n+1}-|f(q)|\\
       &\geq a_n|q|^{n+1}-a_n|q|^{n}\\
       &=a_n|q^{n}|(|q|-1).
    \end{split}
\end{equation*}
This implies that $\left| p(q)*(1-q)\right|>0$ for each $|q|>1.$  According to equation (\ref{zero}), $|p(q)|>0$ for all $|q|>1.$ Therefore, all the zeros of $p(q)$ lie in $|q|\leq 1.$
\end{proof}
Taking into account $p(q/a)$ for some $a>0$ in Theorem \ref{E-K}, we have a more general result as follows:
\begin{corollary}
  Let $p(q)=a_0+qa_1+q^2a_2+\dots+q^na_n$ be a polynomial of degree $n$ with octonionic variable and real coefficients,  satisfying 
    \begin{equation*}
        0< a_0a^n\leq a_1a^{n-1}\leq \dots\leq a_{n-2}a^2\leq a_{n-1}a\leq a_n
    \end{equation*}
    for some $a>0.$
    Then all the zeros of $p$ lie in $|q|\leq 1/a.$
\end{corollary}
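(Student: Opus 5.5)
The plan is to reduce to Theorem \ref{E-K} by rescaling the variable. Given $p$ with real coefficients satisfying the hypothesis for some $a>0$, define the polynomial
$$P(q):=a^n p(q/a)=\sum_{k=0}^n q^k\,(a_k a^{n-k}),$$
and then transfer the zero location from $P$ back to $p$.

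The first step is to justify this expansion. Since $a$ and all $a_k$ are real, they lie in the center of $\mathbb{O}$, and by the Artin Theorem the subalgebra generated by $q$ and the reals is associative (indeed commutative, a copy of $\mathbb{C}$). Hence $(q/a)^k a_k = q^k (a^{-k}a_k)$ holds unambiguously. Multiplying by the real scalar $a^n$ gives exactly the expansion above. So $P$ is an octonionic polynomial of degree $n$ with real coefficients $b_k:=a_k a^{n-k}$.

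The second step is to check the hypotheses of Theorem \ref{E-K} for $P$. The assumption $0<a_0a^n\le a_1a^{n-1}\le\dots\le a_{n-1}a\le a_n$ says precisely that $0<b_0\le b_1\le\dots\le b_n$, and $b_n=a_n\neq 0$. Theorem \ref{E-K} then gives that every zero $w$ of $P$ satisfies $|w|\le 1$.

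Finally, suppose $p(q_0)=0$. Set $w=aq_0$. Then $P(w)=a^n p(q_0)=0$, so $|aq_0|\le 1$. Since $|aq_0|=a|q_0|$ for real $a>0$, this gives $|q_0|\le 1/a$, which is the claim.

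The only point needing care is the first step: one must not manipulate powers of $q$ carelessly in a nonassociative algebra. This is handled because real scalars are central and powers of a single octonion associate (power-associativity, via Artin's Theorem). There is no analytic obstacle, since all the estimates are already contained in Theorem \ref{E-K}.
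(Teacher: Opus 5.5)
Your proposal is correct and is essentially the paper's argument: the paper obtains this corollary in one line, by applying Theorem~\ref{E-K} to the rescaled polynomial $p(q/a)$, which is your $P(q)=a^n p(q/a)$ with coefficients $a_k a^{n-k}$ satisfying the Enestr\"{o}m--Kakeya hypotheses. Your appeal to Artin's theorem to justify $(q/a)^k a_k = q^k(a^{-k}a_k)$, and your explicit transfer of zeros via $w=aq_0$, supply details the paper leaves implicit.
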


In the theorem above, we can replace the non-negative monotonicity of the coefficients with the monotonicity of the coefficient moduli.

\begin{theorem}\label{a}
    Let $p(q)=a_0+qa_1+q^2a_2+\dots+q^na_n\not \equiv  0$ be a polynomial of degree $n$ with octonionic variable and octonionic coefficients,  satisfying 
    \begin{equation*}
      |a_0|a^n\leq |a_1|a^{n-1}\leq \dots\leq |a_{n-2}|a^2\leq |a_{n-1}|a\leq |a_n|
    \end{equation*}
    for some $a>0.$
    Then all the zeros of $p$ lie in $|q|\leq \frac{1}{a}K_1,$ where $K_1$ is the greatest positive root of the trinomial equation 
    $$K^{n+1}-2K^n+1=0.$$
\end{theorem}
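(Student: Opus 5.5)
The plan is to avoid the regular product and the Maximum Modulus Principle altogether and bound $|p(q)|$ from below directly. Two facts about $\mathbb{O}$ make this possible: the norm is multiplicative, $|xy|=|x||y|$, and powers $q^k$ are well defined by power-associativity, which follows from the Artin Theorem used in the proof of Theorem \ref{E-K}. Together they give $|q^k a_k|=|q|^k|a_k|$ for every $q\in\mathbb{O}$ and $a_k\in\mathbb{O}$.

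\emph{Step 1: reduce to $a=1$.} Put $b_k=a_k a^{n-k}$ (a real multiple of $a_k$) and
$$P(q)=\sum_{k=0}^n q^k b_k .$$
Since $a>0$ is real, it commutes and associates with every octonion, so
$$P(aq)=a^n\sum_{k=0}^n q^k a_k=a^n p(q).$$
Hence $p(q)=0$ if and only if $P(aq)=0$. The hypothesis becomes $|b_0|\le |b_1|\le\dots\le |b_n|=|a_n|$, and it suffices to show that every zero of $P$ satisfies $|q|\le K_1$. We have $a_n\neq 0$ because $p$ has degree $n$.

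\emph{Step 2: lower bound for $|P(q)|$.} Fix $q$ with $|q|=r>1$. By the triangle inequality, multiplicativity of the norm, and monotonicity of the moduli,
$$|P(q)|\ \ge\ |b_n|r^n-\sum_{k=0}^{n-1}|b_k|r^k\ \ge\ |b_n|\Big(r^n-\sum_{k=0}^{n-1}r^k\Big)=\frac{|b_n|}{r-1}\big(r^{n+1}-2r^n+1\big).$$

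\emph{Step 3: analyse $h(K)=K^{n+1}-2K^n+1$.} The main point to check carefully is that $h(r)>0$ for every $r>K_1$, and that zeros with $|q|\le 1$ are harmless. First, $h(1)=0$, so $K_1\ge 1$ exists. Since $K_1$ is the greatest positive root and $h(K)\to+\infty$, we get $h>0$ on $(K_1,\infty)$. In fact $h(K)=(K-1)(K^n-K^{n-1}-\dots-1)$, and the second factor is increasing for $K\ge1$ (its derivative is positive there), so for $n\ge2$ it has exactly one root beyond $1$ and that root is $K_1$. For $n=1$ we have $h=(K-1)^2$ and $K_1=1$. Thus for $|q|=r>\max(1,K_1)=K_1$, Step 2 gives $|P(q)|>0$. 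Any zero of $P$ therefore satisfies $|q|\le K_1$, and by Step 1 any zero of $p$ satisfies $|q|\le K_1/a$.
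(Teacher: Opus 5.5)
Your proof is correct and is essentially the paper's argument. Both use a direct triangle-inequality lower bound $|p(q)|\ge |q^na_n|-\sum_{k<n}|q^ka_k|$ with the lower-order terms controlled by a geometric sum. Your rescaling $b_k=a_ka^{n-k}$ plays the role of the paper's substitution $K=aR$, and normalizing by $|b_n|=|a_n|\neq0$ even avoids the paper's implicit division by $|a_{n-1}|$.

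One parenthetical claim is false, though nothing rests on it. The derivative of $K^n-K^{n-1}-\dots-1$ is not positive on $[1,\infty)$ for $n\ge3$: at $K=1$ it equals $n(3-n)/2$, which is $-2$ for $n=4$. Uniqueness of the root beyond $1$ follows instead from writing this factor as $K^n\bigl(1-\sum_{j=1}^nK^{-j}\bigr)$. In any case your argument only needs $h>0$ on $(K_1,\infty)$, which you had already obtained from the definition of $K_1$.
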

\begin{proof}
   Let $$p(q)=a_0+qa_1+q^2a_2+\dots+q^na_n\quad \text{with}\ q\in \mathbb{O}\ \text{and}\ a_l\in \mathbb{O}.$$
Denote
$$f(q):=a_0+qa_1+q^2a_2+\dots+q^{n-1}a_{n-1},$$
then $p(q)=f(q)+q^na_n.$

Note that 
$$\frac{|a_k|}{|a_{n-1}|}\leq \frac{1}{a^{n-k-1}}\quad \text{for}\ k=0,1,2,\dots,n-2.$$ 
Then for $|q|=R\ \left (>\frac{1}{a}\right),$ we have
\begin{equation*}
    \begin{split}
      |f(q)|&\leq |a_0|+|qa_1|+|q^2a_2|+\dots +|q^{n-2}a_{n-2}|+|q^{n-1}a_{n-1}| \\&=  R^{n-1}\left|a_{n-1}\right |\cdot \left (1+\frac{1}{R}\frac{|a_{n-2}|}{|a_{n-1}|}+\dots +\frac{1}{R^{n-2}}\frac{|a_1|}{|a_{n-1}|}+\frac{1}{R^{n-1}}\frac{|a_0|}{|a_{n-1}|}\right) \\
      &\leq R^{n-1}\left|a_{n-1}\right |\cdot \left (1+\frac{1}{aR} +\dots +\frac{1}{(aR))^{n-2}} +\frac{1}{(aR)^{n-1}}\right) \\
      &=|a_{n-1}| \frac{(aR)^{n}-1}{a^{n-1}(aR-1)}.
    \end{split}
\end{equation*}
Hence, if $|q|=R\ \left (>\frac{1}{a}\right),$ then 
\begin{equation*}
    \begin{split}
        |p(q)|&= |f(q)+q^na_n|\\
        &\geq |q^na_n|-|f(q)|\\
        &\geq  |a_{n}|R^{n}-\left |a_{n-1}\right | \frac{(aR)^{n}-1}{a^{n-1}(aR-1)}\\
        &>0
    \end{split}
\end{equation*}
   if $$\frac{|a_{n}|}{a|a_{n-1}|}\geq \frac{(aR)^{n}-1}{(aR)^n(aR-1)}.$$
Since $$\frac{|a_{n}|}{a|a_{n-1}|}\geq 1,$$
 implies that if $(aR)^{n}-1<(aR)^n(aR-1),$ we have $|p(q)|>0 $ on $|q|=R.$
Let $K=aR$, the conclusion holds as desired.
\end{proof}

\begin{remark}
    If the polynomial in Theorem \ref{a} has gaps and the non vanishing coefficients $a_n,a_{n_1},a_{n_2},\dots$ satisfying
    $$|a_0|a^n\leq \dots\leq |a_{n_2}|a^{n-n_2}\leq |a_{n_1}|a^{n-n_1}\leq |a_n|,$$
    then the conclusion still holds.
\end{remark}

For any $q_1=\sum_{\ell=0}^7 x_\ell e_\ell,\ q_2=\sum_{\ell=0}^7 y_\ell e_\ell\in \mathbb{O},$ we can regard them as real vectors in $\mathbb{R}^8,$ and define the angle $\measuredangle(q_1,q_2)$ between $q_1$ and $q_2$ by the equation
$$ \cos{\measuredangle(q_1,q_2) } =\frac{\sum_{\ell=0}^7x_\ell y_\ell}{|q_1||q_2|}.$$
 There are the following geometric properties.

\begin{theorem}\label{Moduli}
     Let $p(q)=a_0+qa_1+q^2a_2+\dots+q^na_n\not \equiv  0$ be a polynomial of degree $n$ with octonionic variable and octonionic coefficients,  satisfying 
    \begin{equation*}
      \measuredangle (a_k,\beta)\leq \alpha\leq \pi/2,\quad  k=0,1,2,\dots n.
    \end{equation*}
    for some real $\beta$ and
    $$|a_0|\leq |a_1| \leq \dots  \leq |a_{n-1}| \leq|a_n| .$$
    Then all the zeros of $p$ lie in 
    $$|q|\leq \cos{\alpha}+\sin{\alpha}+\frac{2\sin{\alpha}}{|a_n|}\sum_{k=0}^{n-1}|a_k|.$$
\end{theorem}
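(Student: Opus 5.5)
We follow the Govil--Rahman argument, adapted to octonions along the lines of the proof of Theorem \ref{E-K}. The statement says ``for some real $\beta$''. We read it as a fixed direction $\beta\in\mathbb{O}$, $\beta\neq 0$, since for real $\beta$ the hypothesis only says that each $a_k$ lies in the cone of half-angle $\alpha$ around $\pm1$. Everything below uses only $\measuredangle(a_k,\beta)\le\alpha$ and real inner products. We form
$$p(q)*(1-q)=a_0+\sum_{k=1}^n q^k(a_k-a_{k-1})-q^{n+1}a_n .$$
For this to equal $(1-q)p(q)$ we need $q(q^ka_k)=q^{k+1}a_k$ for octonionic $a_k$. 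Artin's theorem gives this, because the subalgebra generated by $q$ and $a_k$ is associative. Consequently $p(q)*(1-q)$ vanishes only where $p(q)=0$ or $q=1$, exactly as in (\ref{zero}).

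\textbf{Key inequality.} The crucial estimate is the octonionic version of the Govil--Rahman lemma. Since $|a_{k-1}|\le|a_k|$ and both vectors lie within angle $\alpha\le\pi/2$ of $\beta$, the angle between them is at most $2\alpha$. We claim
$$|a_k-a_{k-1}|\le (|a_k|-|a_{k-1}|)\cos\alpha+(|a_k|+|a_{k-1}|)\sin\alpha .$$
This is a purely Euclidean statement in $\mathbb{R}^8$. Both vectors lie in a two-dimensional plane, and after projecting onto the plane spanned by them the angles to (the projection of) $\beta$ do not increase. So it reduces to the complex case, which we prove by the usual computation: write $a_k=|a_k|e^{i\theta}$ and $a_{k-1}=|a_{k-1}|e^{i\phi}$ with $|\theta-\phi|\le 2\alpha$, expand $|a_k-a_{k-1}|^2$, and compare squares. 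We expect this planar reduction, i.e.\ justifying that the angle bounds survive, to be the main point needing care.

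\textbf{Bound on the sphere.} For $|q|=1$ we use $|q^k c|=|c|$, which holds since the norm is multiplicative. Summing the key inequality over $k$ gives
$$|f(q)|\le |a_0|+\sum_{k=1}^n|a_k-a_{k-1}|\le |a_n|(\cos\alpha+\sin\alpha)+2\sin\alpha\sum_{k=0}^{n-1}|a_k|-|a_0|(\cos\alpha+\sin\alpha-1).$$
The last term is nonnegative and can be dropped, so $|f(q)|\le M$ with
$$M:=|a_n|(\cos\alpha+\sin\alpha)+2\sin\alpha\sum_{k=0}^{n-1}|a_k| .$$

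\textbf{Extension outside the sphere.} Exactly as in the proof of Theorem \ref{E-K}, we apply Theorem \ref{MMP} to $g(q)=q^n f(1/q)$, the polynomial with reversed coefficients. This gives $|f(q)|\le M|q|^n$ for $|q|\ge1$. Then
$$|p(q)*(1-q)|\ge |a_n||q|^{n+1}-M|q|^n>0 \quad\text{whenever } |q|>M/|a_n|,$$
which is the stated radius. Since $M/|a_n|\ge\cos\alpha+\sin\alpha\ge1$, the excluded point $q=1$ does not lie in this region. So $p$ has no zeros there.
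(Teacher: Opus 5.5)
Your proposal is correct and follows essentially the same route as the paper. Both arguments use the angle bound $\measuredangle(a_k,a_{k-1})\le 2\alpha$ and the cosine law to get $|a_k-a_{k-1}|\le(|a_k|-|a_{k-1}|)\cos\alpha+(|a_k|+|a_{k-1}|)\sin\alpha$, sum this to bound $f(q)=p(q)*(1-q)+q^{n+1}a_n$ on $|q|=1$, extend the bound to $|q|\ge 1$ via the maximum modulus principle for $q^nf(1/q)$, and conclude from $|p(q)*(1-q)|\ge |a_n||q|^{n+1}-M|q|^n$. Your planar reduction is a harmless detour, since the paper applies the cosine law directly in $\mathbb{R}^8$. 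Your explicit appeal to Artin's theorem on the pair $q,a_k$ to get $p(q)*(1-q)=(1-q)p(q)$ for octonionic coefficients fills in a step the paper only justified for real coefficients.
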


\begin{proof}
    Let $p(q)=a_0+qa_1+q^2a_2+\dots+q^na_n\not \equiv  0$ be a octonionic polynomial of degree $n$. 
    For each $a_k\in \mathbb{O}$ for $k=0,1,\dots,n,$  we can regard it as a real vector in $\mathbb{R}^8.$
    Given the conditions, we can conclude that
    $$\measuredangle(a_k,a_{k-1})\leq 2\alpha\leq \pi \quad \text{for}\ k=0,1,\dots,n.$$
Then
 \begin{equation*}
        \begin{split}
            |a_k-a_{k-1}|^2&=|a_{k}|^2+|a_{k-1}|^2-2|a_{k}||a_{k-1}|\cos{\measuredangle(a_k,a_{k-1})}\\
            &\leq |a_{k}|^2+|a_{k-1}|^2-2|a_{k}||a_{k-1}|\cos{2\alpha}\\
            &=(|a_{k}|-|a_{k-1}|)^2\cos^2{\alpha}+(|a_{k}|+|a_{k-1}|)^2\sin ^2{\alpha}\\
            &\leq \left ((|a_{k}|-|a_{k-1}|)\cos{\alpha }+(|a_{k}|+|a_{k-1}|)\sin{\alpha }\right )^2.
        \end{split}
\end{equation*}
By the monotonicity of the moduli of the coefficients,
\begin{equation}\label{tri}
    |a_k-a_{k-1}|\leq (|a_{k}|-|a_{k-1}|)\cos{\alpha }+(|a_{k}|+|a_{k-1}|)\sin{\alpha }.
\end{equation}

Let $f(q)=p(q)*(1-q)+q^{n+1}a_n,$  by equation $(\ref{tri}),$ if $|q|=1,$ then
\begin{equation*}
    \begin{split}
        |f(q)|&=|a_0+\sum_{k=1}^nq^k(a_k-a_{k-1})|\\
       &\leq  |a_0|+\sum_{k=1}^n|(a_k-a_{k-1})|\\ 
       &\leq |a_0|+\sum_{k=1}^n(|a_{k}|-|a_{k-1}|)\cos{\alpha }+(|a_{k}|+|a_{k-1}|)\sin{\alpha }\\
       &=|a_0|(1-\cos{\alpha}-\sin{\alpha})+|a_n|(\cos{\alpha}+\sin{\alpha})+2\sin{\alpha }\sum_{k=0}^{n-1}|a_{k}|\\ 
       &\leq |a_n|(\cos{\alpha}+\sin{\alpha})+2\sin{\alpha }\sum_{k=0}^{n-1}|a_{k}|.
    \end{split}
\end{equation*}
Using the same approach as in Theorem \ref{E-K}, if $|q|\geq 1,$ we have
$$|f(q)|\leq \left(|a_n|(\cos{\alpha}+\sin{\alpha})+2\sin{\alpha }\sum_{k=0}^{n-1}|a_{k}|\right)|q|^n.$$

Consequently, for all $|q|>1,$
\begin{equation*}
    \begin{split}
       \left| p(q)*(1-q)\right|&=|f(q)-q^{n+1}a_n|\\
       &\geq |a_n||q|^{n+1}-\left(|a_n|(\cos{\alpha}+\sin{\alpha})+2\sin{\alpha }\sum_{k=0}^{n-1}|a_{k}|\right)|q|^{n}\\
       &=|a_n||q^{n}|\left\{|q|-\left(\cos{\alpha}+\sin{\alpha}+\frac{2\sin{\alpha }}{|a_n|}\sum_{k=0}^{n-1}|a_{k}|\right)\right).
    \end{split}
\end{equation*}
Let $$r:=\cos{\alpha}+\sin{\alpha}+\frac{2\sin{\alpha }}{|a_n|}\sum_{k=0}^{n-1}|a_{k}|,$$ 
then $r\geq 1.$
Thus, if $|q|>r,$ then $ p(q)*(1-q)\neq0,$ implies that $p(q)\neq 0,$
by equation (\ref{zero}). This completes the proof.
\end{proof}

\begin{remark}
    With $\alpha=0$, Theorem \ref{Moduli} reduces to Theorem \ref{E-K}.
\end{remark}

\begin{corollary}
  Let $p(q)=a_0+qa_1+q^2a_2+\dots+q^na_n\not \equiv  0$ be a polynomial of degree $n$ with octonionic variable and octonionic coefficients,  satisfying 
    \begin{equation*}
      \measuredangle (a_k,\beta)\leq \alpha\leq \pi/2,\quad  k=0,1,2,\dots n.
    \end{equation*}
    for some real $\beta$ and
    $$|a_0|\geq |a_1| \geq \dots  \geq |a_{n-1}| \geq|a_n| .$$
    Then all the zeros of $p$ lie in 
    $$|q|\geq \left\{\cos{\alpha}+\sin{\alpha}+\frac{2\sin{\alpha}}{|a_0|}\sum_{k=1}^{n}|a_k|\right \}^{-1}.$$  
\end{corollary}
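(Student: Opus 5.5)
The natural route is to apply Theorem \ref{Moduli} to the reversed polynomial and then invert. Set
$$P(q):=a_n+qa_{n-1}+q^2a_{n-2}+\dots+q^na_0,$$
a polynomial of degree $n$ (we assume $a_0\neq 0$; otherwise $|a_k|=0$ for all $k$ by the monotonicity, contradicting $p\not\equiv 0$). Its coefficients $b_k:=a_{n-k}$ satisfy $\measuredangle(b_k,\beta)\leq\alpha\leq\pi/2$ and $|b_0|\leq|b_1|\leq\dots\leq|b_n|$. Theorem \ref{Moduli} then gives that every zero $w$ of $P$ obeys
$$|w|\leq r:=\cos\alpha+\sin\alpha+\frac{2\sin\alpha}{|a_0|}\sum_{k=1}^{n}|a_k|.$$

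The remaining task is to link zeros of $p$ with zeros of $P$. Because the variable is octonionic and the coefficients are octonions, I will not try to factor; I only need the identity
$$P(q)=q^n\,p(q^{-1})\qquad (q\neq 0).$$
For real $t$ and $\xi\in\mathbb{O}$ one has $q^{-1}=\bar q/|q|^2$, and $q^n$, $q^{-k}$ all lie in the associative subalgebra $\mathbb{R}+\mathbb{R}I_q\cong\mathbb{C}$. So the question is whether $q^n(q^{-k}a_k)=q^{n-k}a_k$ holds for octonionic $a_k$. This follows from Artin's theorem, since the subalgebra generated by the two elements $q$ and $a_k$ is associative. Summing over $k$ gives the identity.

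Now take a zero $q_0$ of $p$. Then $q_0\neq 0$, because $p(0)=a_0\neq 0$. Put $w=q_0^{-1}$; the identity gives $P(w)=w^n p(q_0)=0$. Hence $|q_0|^{-1}=|w|\leq r$, i.e. $|q_0|\geq r^{-1}$, which is the claimed bound.

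The main obstacle is the nonassociative step $q^n(q^{-k}a_k)=q^{n-k}a_k$. I will handle it with Artin's theorem applied to the pair $\{q,a_k\}$, exactly as the paper did for $q(q^ka_k)$ in the proof of Theorem \ref{E-K}.
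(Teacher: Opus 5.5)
Your proposal is correct and is essentially the paper's proof: you form the reversed polynomial $q^n p(1/q)=a_n+qa_{n-1}+\dots+q^na_0$ via Artin's theorem, apply Theorem~\ref{Moduli} to it, and invert the resulting bound. You also make explicit two points the paper leaves tacit: that $a_0\neq 0$, so $p(0)\neq 0$ and the reversed polynomial has degree $n$, and that a zero $q_0$ of $p$ gives the zero $q_0^{-1}$ of the reversed polynomial.
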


\begin{proof}
    Let $f(q)=q^np(1/q).$ By the  Artin Theorem in \cite{schafer2017introduction}, we get
    \begin{equation*}
        \begin{split}
           f(q)&=q^n(a_0+\frac{1}{q}a_1+\frac{1}{q^2}a_2+\dots+\frac{1}{q^n}a_n ) \\
           &=a_n+qa_{n-1}+q^2a_{n-2}+\dots+q^na_0. 
        \end{split}
    \end{equation*}
Applying Theorem \ref{Moduli}, we get the result as desired.
\end{proof}

\begin{theorem}
    Let $p(q)=a_0+qa_1+q^2a_2+\dots+q^na_n\not \equiv  0$ be an octonionic polynomial of degree $n$, where $a_k=\sum_{\ell=0}^7a_{k,\ell}e_{ \ell}\  \text{for}\ k=0,1,2,\dots,n.$ If 
\begin{equation}\label{coefficient}
    0 \leq a_{0,0}\leq a_{1,0}\leq \dots \leq a_{n,0}, \quad  a_{n,0}\neq 0.
\end{equation}
    Then all the zeros of $p$ lie in 
    $$|q|\leq 1+\frac{2}{a_{n,0}}\sum_{k=0}^n\sum_{\ell=1}^7|a_{k,\ell}|.$$
\end{theorem}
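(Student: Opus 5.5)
We follow the scheme of the proof of Theorem \ref{E-K}. Form $p(q)*(1-q)$ via \eqref{star product}. Since $1$ and $-1$ are real, the coefficient of $q^k$ in the regular product is simply $a_k-a_{k-1}$. Hence
$$p(q)*(1-q)=a_0+\sum_{k=1}^n q^k(a_k-a_{k-1})-q^{n+1}a_n .$$
Because $q$ and each $a_k$ generate an associative subalgebra (Artin's Theorem), we have $q(q^ka_k)=q^{k+1}a_k$. Therefore $p(q)*(1-q)=p(q)-qp(q)=(1-q)p(q)$, and equivalence \eqref{zero} still holds. So it suffices to show that $p(q)*(1-q)\neq 0$ for $|q|$ larger than the stated bound.

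Set $f(q)=a_0+\sum_{k=1}^n q^k(a_k-a_{k-1})$. For $|q|=1$, use $|q^kc|=|q|^k|c|$, valid since $\mathbb{O}$ is a normed composition algebra. Splitting each coefficient into real and imaginary parts, $a_k-a_{k-1}=(a_{k,0}-a_{k-1,0})+\sum_{\ell=1}^7(a_{k,\ell}-a_{k-1,\ell})e_\ell$, and the triangle inequality gives
$$|f(q)|\le a_{0,0}+\sum_{\ell=1}^7|a_{0,\ell}|+\sum_{k=1}^n\Big((a_{k,0}-a_{k-1,0})+\sum_{\ell=1}^7\big(|a_{k,\ell}|+|a_{k-1,\ell}|\big)\Big).$$
By the monotonicity and nonnegativity in \eqref{coefficient}, the real parts telescope to $a_{n,0}$. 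The imaginary parts give at most $2\sum_{k=0}^n\sum_{\ell=1}^7|a_{k,\ell}|$. Writing $M$ for this total bound, we get $|f(q)|\le a_{n,0}+2\sum_{k=0}^n\sum_{\ell=1}^7|a_{k,\ell}|=:M$ on $|q|=1$.

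Next, transfer this bound outward exactly as in Theorem \ref{E-K}. Put $g(q)=q^nf(1/q)$, which by Artin's Theorem is the polynomial with the reversed coefficients. It is regular, and $|g|=|f(1/q)|\le M$ on $|q|=1$. By the Maximum Modulus Principle (Theorem \ref{MMP}), $|g|\le M$ on $|q|\le 1$, hence $|f(q)|\le M|q|^n$ for $|q|\ge1$. Then, for $|q|\ge 1$,
$$|p(q)*(1-q)|\ge |a_n||q|^{n+1}-M|q|^n\ge a_{n,0}|q|^{n+1}-M|q|^n=a_{n,0}|q|^n\Big(|q|-\frac{M}{a_{n,0}}\Big).$$
Here $|a_n|\ge a_{n,0}>0$. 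Since $M/a_{n,0}=1+\frac{2}{a_{n,0}}\sum_{k,\ell\ge1}|a_{k,\ell}|$, this is positive for $|q|$ beyond the claimed radius. Equivalence \eqref{zero} then gives $p(q)\neq0$ there, because $q=1$ lies inside the radius.

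The main obstacle is the coefficient bookkeeping in the estimate for $|f|$ on $|q|=1$. One must separate real and imaginary components by the triangle inequality rather than using $|a_k-a_{k-1}|$ directly, so that the monotonicity hypothesis on the real parts alone suffices. One must also check that the imaginary contributions are bounded by twice the full sum. The maximum-modulus step is a repeat of Theorem \ref{E-K} and should cause no difficulty.
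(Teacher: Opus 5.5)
Your proof is correct and follows the paper's overall scheme: form $p*(1-q)$, use Artin's theorem to identify it with $(1-q)p(q)$, bound it on $|q|=1$ by splitting each $a_k-a_{k-1}$ into real and imaginary parts, push the bound outward with the maximum modulus principle applied to $q^nf(1/q)$, and finish with the reverse triangle inequality.

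The one real difference is the choice of auxiliary polynomial, and there your choice is the sound one.

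\emph{The paper's choice.} The paper sets $f=p*(1-q)+q^{n+1}a_{n,0}$. This leaves the term $-q^{n+1}\sum_{\ell=1}^7 a_{n,\ell}e_\ell$ inside $f$. Whenever $a_n\notin\mathbb{R}$, this $f$ has degree $n+1$, so $q^nf(1/q)$ is not a polynomial. The asserted bound $|f(q)|\le M|q|^n$ for $|q|\ge 1$ then fails for large $|q|$, because $|f(q)|$ grows like $|q|^{n+1}$ times the modulus of the imaginary part of $a_n$.

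\emph{Your choice.} You take $f=p*(1-q)+q^{n+1}a_n$, which has degree $n$, exactly as in Theorem~\ref{E-K}. So the reversal and the maximum modulus step are legitimate. The only price is the harmless inequality $|a_n|\ge a_{n,0}>0$ in the last step.

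Your bookkeeping actually gives a little more. Put $s_k=\sum_{\ell=1}^7|a_{k,\ell}|$. The bound on the unit sphere is then $a_{n,0}+2\sum_{k=0}^{n-1}s_k+s_n$. Keeping $|a_n|$ in the final step gives the radius $\max\{1,(a_{n,0}+2\sum_{k=0}^{n-1}s_k+s_n)/|a_n|\}$.

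Your closing appeal to $q=1$ lying inside the ball is unnecessary, since $(1-q)p(q)\neq 0$ already forces $p(q)\neq 0$.
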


\begin{proof}
Let $f(q)=p(q)*(1-q)+q^{n+1}a_{n,0},$ then 
\begin{equation*}
    \begin{split}
      f(q)&=a_0+\sum_{k=1}^nq^k(a_k-a_{k-1})-q^{n+1}a_n+q^{n+1}a_{n,0}
      \\&= a_0+\sum_{k=1}^nq^k(a_k-a_{k-1})-q^{n+1}\sum_{\ell=1}^7a_{n,\ell}e_{\ell}.  
    \end{split}
\end{equation*}

If $|q|=1,$ then 
\begin{equation}\label{f}
    \begin{split}
      |f(q)|&\leq  |a_0|+\sum_{k=1}^n|a_k-a_{k-1}|+\sum_{\ell=1}^7|a_{n,\ell}|\\
      &=|a_0|+\sum_{k=1}^n|a_k-a_{k-1}|+\sum_{\ell=1}^7|a_{n,\ell}|
    \end{split}
\end{equation}

Since
$$|a_0|\leq \sum_{\ell=0}^7|a_{0,\ell}|$$
and for $k=1,2,\dots,n,$ according to equation $(\ref{coefficient}),$
\begin{equation*}
    \begin{split}
       |a_k-a_{k-1}|&=\left|\sum_{\ell=0}^7(a_{k,\ell}-a_{k-1,\ell})e_{\ell}\right|\\ &\leq  |a_{k,0}-a_{k-1,0}|+\sum_{\ell=1}^7|a_{k,\ell}|+|a_{k-1,\ell}|\\ &\leq  (a_{k,0}-a_{k-1,0})+\sum_{\ell=1}^7|a_{k,\ell}|+|a_{k-1,\ell}|.
    \end{split}
\end{equation*}

Combining with equation $(\ref{f}),$ we get for $|q|=1,$
\begin{equation*}
    \begin{split}
        |f(q)|&\leq \sum_{\ell=0}^7|a_{0,\ell}|+\sum_{k=1}^n\left(a_{k,0}-a_{k-1,0}+\sum_{\ell=1}^7|a_{k,\ell}|+|a_{k-1,\ell}|\right)+\sum_{\ell=1}^7|a_{n,\ell}| \\&=\sum_{\ell=1}^7|a_{0,\ell}|+a_{n,0}+\sum_{k=1}^n\left (\sum_{\ell=1}^7|a_{k,\ell}|+|a_{k-1,\ell}|\right)+\sum_{\ell=1}^7|a_{n,\ell}| \\ &=a_{n,0}+2\sum_{k=0}^n\sum_{\ell=1}^7|a_{k,\ell}|. 
    \end{split}
\end{equation*}
 Using the same approach as in Theorem \ref{E-K}, if $|q|\geq 1,$ we have
$$|f(q)|\leq \left(a_{n,0}+2\sum_{k=0}^n\sum_{\ell=1}^7|a_{k,\ell}|\right)|q|^n.$$
Consequently, for all $|q|>1,$
\begin{equation*}
    \begin{split}
       \left| p(q)*(1-q)\right|&=|f(q)-q^{n+1}a_{n,0}|\\ &\geq |a_{n,0}||q|^{n+1}-|f(q)|\\
       &\geq a_{n,0}|q|^{n+1}-\left(a_{n,0}+2\sum_{k=0}^n\sum_{\ell=1}^7|a_{k,\ell}|\right)|q|^{n}\\
       &=a_{n,0}|q^{n}|\left\{|q|-\left(1+\frac{2}{a_{n,0}}\sum_{k=0}^n\sum_{\ell=1}^7|a_{k,\ell}|\right)\right).
    \end{split}
\end{equation*}
Let $$r:=1+\frac{2}{a_{n,0}}\sum_{k=0}^n\sum_{\ell=1}^7|a_{k,\ell}|,$$ 
then $r\geq 1.$
Thus, if $|q|>r,$ then $ p(q)*(1-q)\neq0,$ implies that $p(q)\neq 0.$ This completes the proof.
\end{proof}

\subsection*{Funding}{This work was supported by the National Natural Science Foundation of China(12401104), the Fundamental Research Funds for the Central Universities (WK0010000091) and Xiaomi Young Talents Program.}

\bibliographystyle{amsplain}
\bibliography{mybibfile}
\printindex

\end{document}